\gdef\n@te#1#2{\leavevmode\vadjust{%
 {\setbox\z@\hbox to\z@{\strut#1}%
  \setbox\z@\hbox{\raise\dp\strutbox\box\z@}\ht\z@=\z@\dp\z@=\z@%
  #2\box\z@}}}
\gdef\leftnote#1{\n@te{\hss#1\quad}{}}
\gdef\rightnote#1{\n@te{\quad\kern-\leftskip#1\hss}{\moveright\hsize}}
\gdef\?{\FN@\qumark}
\gdef\qumark{\ifx\next"\DN@"##1"{\leftnote{\rm##1}}\else
 \DN@{\leftnote{\rm??}}\fi{\rm??}\next@}}
\DeclareFontFamily{OT1}{wncyr}{\hyphenchar\font45 }
\DeclareFontShape{OT1}{wncyr}{m}{n}{%
   <5> <6> <7> <8> <9> gen * wncyr
   <10> <10.95> <12> <14.4> <17.28> <20.74>  <24.88>wncyr10}{}
\DeclareFontShape{OT1}{wncyr}{m}{it}{%
   <5> <6> <7> <8> <9> gen * wncyi
   <10> <10.95> <12> <14.4> <17.28> <20.74> <24.88> wncyi10}{}
\DeclareFontShape{OT1}{wncyr}{m}{sc}{%
   <5> <6> <7> <8> <9> <10> <10.95> <12> <14.4>
   <17.28> <20.74> <24.88>wncysc10}{}
\DeclareFontShape{OT1}{wncyr}{b}{n}{%
   <5> <6> <7> <8> <9> gen * wncyb
   <10> <10.95> <12> <14.4> <17.28> <20.74> <24.88>wncyb10}{}
\DeclareMathSymbol{\twoheadrightarrow} {\mathrel}{AMSa}{"10}
\theoremstyle{plain}
\newtheorem{theorem}{Theorem}
\newtheorem{proposition}{Proposition}
\newtheorem{lemma}{Lemma}
\newtheorem{corollary}{Corollary}
\theoremstyle{definition}
\theoremstyle{remark}
\def\pp{{\mathbb P}}
\begin{document}

\title[Ideal of the variety of flexes of plane cubics]
{Ideal of the variety of flexes\\ of plane cubics}
\author[Vladimir L. Popov]{Vladimir L. Popov${}^{1}$}
\thanks{\today.
\newline \indent
${}^1$  Steklov Mathematical Institute,
Russian Academy of Sciences, Gub\-kina 8, Mos\-cow
119991, Russia, {\rm popovvl@mi-ras.ru}.
}

\begin{abstract}
We prove that the variety of flexes of algebraic curves of degree $3$ in the projective plane is
an ideal theoretic complete intersection in the product of a two-dimensional and a nine-dimen\-sional projective spaces.
\end{abstract}

\maketitle

\section{Introduction}\label{s1}

We fix an algebraically closed field $k$ of characteristic zero. Let $L$
be a $3$-dimensional vector space over $k$
and let $\Phi$ be the space ${\sf S}^3(L^*)$ of degree $3$ forms on $L$.  Let $x_0, x_1, x_2$ be a basis of $L^*$, and let
$\alpha_{i_0 i_1 i_2}$ be the elements of the basis of $\Phi^*$ dual to a basis of $\Phi$ consisting of monomials $x_0^{i_0}x_1^{i_1}x_2^{i_2}$.
The sets of forms $\{x_j\}$ and $\{\alpha_{i_0 i_1 i_2}\}$ are the projective coordinate systems
on the projective spaces $\mathbb{P}(L)$ and $\mathbb{P}(\Phi)$ associated with  $L$ and $\Phi$.

We denote by $\mathcal P$
the algebra of polynomials over $k$
in two groups of variables $\{x_j\}$ and $\{\alpha_{i_0 i_1 i_2}\}$.
The bi-homogeneous elements of $\mathcal P$ are the ones homogeneous in every of these groups; for such elements is well-defined their vanishing on $\mathbb{P}(L)\times \mathbb{P}(\Phi)$. Given a closed subset $S$ of $\mathbb{P}(L)\times \mathbb{P}(\Phi)$, the ideal
of $\mathcal P$ generated by all bi-homogeneous elements vanishing on $S$ is called {\it the ideal of $S$}.

We consider in $\mathcal P$ the following bi-homogeneous elements
\begin{align}
f&:=\textstyle \sum_{i_0+i_1+ i_2=3} \alpha_{i_0 i_1 i_2} x_0^{i_0}x_1^{i_1}x_2^{i_2},\label{ef}\\
h&:={\rm det}\Big(\frac{\partial^2f}{\partial x_i\partial x_j}\Big)\label{eh}.
\end{align}
whose bi-degrees are
\begin{equation}\label{dfh}
\deg(f)=(1, 3),\;\deg(h)=(3, 3).
\end{equation}
 These elements determine in $\mathbb{P}(L)\times \mathbb{P}(\Phi)$ the closed subset
\begin{equation}\label{F}
F:=\{a\in \mathbb{P}(L)\times \mathbb{P}(\Phi)\mid f(a)=h(a)=0\}
\end{equation}
called the {\it variety of flexes of plane cubics} because $F$ is a compactification of the set of all pairs $(e, E)$ where $E$ is a smooth plane cubic and $e$ is its flex; $F$ was explored in
\cite{Har}, \cite{Kul1}, \cite{Kul2}, \cite{Pop1}, \cite{Pop2}, \cite{Pop3}.

The main result of the present note is the following theorem.
\begin{theorem}\label{main}
 The ideal of $F$ is prime and generated by $f$ and $h$.
 \end{theorem}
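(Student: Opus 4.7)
The approach is the standard recipe for upgrading a set-theoretic complete intersection of the expected codimension to an ideal-theoretic one: combine (i)~irreducibility of the zero locus, (ii)~the Cohen--Macaulay property of complete intersections, and (iii)~a single Jacobian computation.

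\textbf{Step 1 (irreducibility and dimension).} I would analyze the projection $\pi\colon F\to\mathbb{P}(L)$, noting that the diagonal action of $\mathrm{GL}(L)$ identifies all fibers. Over $e=[1\!:\!0\!:\!0]$ the equation $f(e,\cdot)=0$ becomes $\alpha_{300}=0$, a hyperplane in $\mathbb{P}(\Phi)$, and a direct Hessian calculation shows that $h(e,\cdot)|_{\alpha_{300}=0}$ equals, up to a nonzero scalar,
\begin{equation*}
\alpha_{102}\alpha_{210}^{2}-\alpha_{111}\alpha_{210}\alpha_{201}+\alpha_{120}\alpha_{201}^{2}.
\end{equation*}
Viewed as a binary quadratic in $\alpha_{210},\alpha_{201}$ with coefficients $\alpha_{102},-\alpha_{111},\alpha_{120}$, its discriminant $\alpha_{111}^{2}-4\alpha_{102}\alpha_{120}$ is an irreducible polynomial and hence not a square, so the cubic does not factor and cuts out an irreducible hypersurface in $\mathbb{P}^{8}$. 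Thus $\pi^{-1}(e)$ is irreducible of dimension $7$; since $\pi$ is surjective with irreducible equidimensional fibers over the irreducible base $\mathbb{P}(L)$, $F$ is irreducible of dimension $9$.

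\textbf{Step 2 (structural consequences).} The variety $F$ has codimension $2$ in the $11$-dimensional smooth product $\mathbb{P}(L)\times\mathbb{P}(\Phi)$ and is cut out set-theoretically by $f$ and $h$, so Krull's height theorem forces $(f,h)$ to be a regular sequence; hence $\mathcal{P}/(f,h)$ is Cohen--Macaulay and has no embedded associated primes. Because its support $F$ is irreducible, there is a unique minimal prime over $(f,h)$, namely $I(F)$, and $(f,h)$ is $I(F)$-primary. Serre's criterion ($R_{0}+S_{1}\Rightarrow$ reduced, with $S_{1}$ supplied by Cohen--Macaulayness) then reduces the equality $(f,h)=I(F)$ to reducedness of $\mathcal{P}/(f,h)$ at a single closed point of $F$.

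\textbf{Step 3 (Jacobian at one point).} I would take the flex $(e,E)=\bigl([0\!:\!1\!:\!-1],\,x_{0}^{3}+x_{1}^{3}+x_{2}^{3}\bigr)$ of the Fermat cubic; its Hessian is $216\,x_{0}x_{1}x_{2}$, vanishing at $e$, so $(e,E)\in F$. Direct differentiation at $(e,E)$ yields $\partial f/\partial x_{1}=3$, $\partial h/\partial x_{1}=0$, $\partial f/\partial\alpha_{210}=0$, and $\partial h/\partial\alpha_{210}=-72$ (the last via $d\det M=\operatorname{tr}(\operatorname{adj}(M)\,dM)$ applied to $M=\operatorname{diag}(0,6,-6)$). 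Hence $df\wedge dh\neq 0$ in the cotangent space of $\mathbb{A}^{3}\times\mathbb{A}^{10}$ at $(e,E)$, and by bihomogeneity both forms annihilate the two Euler directions, so the independence descends to the cotangent space of $\mathbb{P}(L)\times\mathbb{P}(\Phi)$. Therefore $V(f,h)$ is smooth, and in particular reduced, at $(e,E)$, which completes the proof. The only genuinely substantive step is Step~1; Steps~2 and~3 are formal once $F$ is known to be irreducible.
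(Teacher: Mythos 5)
Your overall strategy---irreducibility, one Jacobian computation, and the Cohen--Macaulay/complete-intersection formalism---is the same as the paper's, and Steps 1 and 3 check out. Step 1 is in fact a nice self-contained replacement for the paper's citation of \cite{Pop1}, \cite{Pop2} for the irreducibility of $F$: the fiber over $[1{:}0{:}0]$ is cut out in the hyperplane $\alpha_{300}=0$ by $-8(\alpha_{102}\alpha_{210}^{2}-\alpha_{111}\alpha_{210}\alpha_{201}+\alpha_{120}\alpha_{201}^{2})$, this cubic is irreducible (your discriminant argument rules out factors linear in $\alpha_{210},\alpha_{201}$; you should also dispose of a putative factor involving only the coefficient variables), and since $F\to\mathbb{P}(L)$ is a projective morphism with all fibers irreducible of dimension $7$, Shafarevich's irreducibility theorem applies---do say ``projective'' explicitly, since equidimensional irreducible fibers over an irreducible base do not suffice without properness. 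Your point in Step 3 lies on $F$, and the minor $\mathrm{diag}(3,-72)$ in the directions $x_{1},\alpha_{210}$ is correct; the paper uses a different point and the minor in the directions $x_{0},x_{1}$, but this is immaterial.

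The gap is in Step 2, at the sentence ``Because its support $F$ is irreducible, there is a unique minimal prime over $(f,h)$.'' The support of $\mathcal{P}/(f,h)$ is not $F$ but the affine multi-cone $C=\{(v_{1},v_{2})\in L\times\Phi\mid f=h=0\}$, which contains the whole irrelevant locus $D=(\{0\}\times\Phi)\cup(L\times\{0\})$ because $f$ and $h$ have positive degree in each group of variables; a priori $C$ could have components invisible in $F$, so the uniqueness of the minimal prime does not follow from irreducibility of $F$ alone. Moreover, ``the ideal of $F$'' in the theorem is by definition the ideal of $\mathcal{P}$ generated by bihomogeneous forms vanishing on $F$, and it must first be identified with the full vanishing ideal of $C$ before any primary-decomposition argument in $\mathcal{P}$ is meaningful. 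This cone-versus-variety bookkeeping is precisely the content of the paper's Section \ref{s2}: Lemma \ref{ideals} (via Dedekind's independence of characters) gives the identification of ideals, and Proposition \ref{p1} with Corollaries \ref{coro1} and \ref{cccc} shows $C=\overline{C\cap U}$ and deduces irreducibility of $C$ from that of $F$. The repair inside your framework is not hard---every component of $C$ has dimension at least $11$ by Krull while $\dim D=10$, so $D$ contributes no component, and $C\cap U\to F$ is a fibration in $2$-dimensional torus orbits to which the paper's Lemma \ref{p2} applies---but as written Step 2 silently substitutes $F$ for $C$, and that substitution is exactly what has to be proved.
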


  As $\dim F=9$ (see \cite{Pop2}, \cite{Pop3}), and $\dim \mathbb{P}(L)=2$, $\dim \mathbb{P}(\Phi)=9$, it follows from \eqref{F} that $F$ is a set theoretical complete intersection in $\mathbb{P}(L)\times \mathbb{P}(\Phi)$. Theorem \ref{main} yields that in fact a stronger property holds:
 \begin{corollary} The set
$F$
is an ideal theoretic complete intersection in $\mathbb{P}(L)\times \mathbb{P}(\Phi)$ of co\-dimen\-sion\;$2$.
\end{corollary}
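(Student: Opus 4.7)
\emph{Plan.} I will prove Theorem~\ref{main}; the Corollary then follows immediately, since $\dim F = 9 = \dim(\mathbb{P}(L) \times \mathbb{P}(\Phi)) - 2$ and the two generators of the ideal cut out this codimension. My strategy is to show that $f, h$ is a regular sequence in $\mathcal{P}$, making $\mathcal{P}/(f, h)$ a Cohen--Macaulay ring of codimension $2$, and then that $V(f, h)$ is both geometrically irreducible and generically reduced. For a Cohen--Macaulay quotient this forces $\mathcal{P}/(f, h)$ to be an integral domain, i.e., $(f, h)$ is prime. Since $(f, h)$ evidently contains neither $(x_0, x_1, x_2)$ nor $(\alpha_{i_0 i_1 i_2})$ (both exclusions are immediate from bi-degree), the prime $(f, h)$ is bi-saturated, and therefore equals $I(F)$.

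\emph{Regular sequence.} The form $f$ is linear in the $\alpha_{i_0 i_1 i_2}$, so it is irreducible and $(f)$ is prime. That $h \notin (f)$ is checked by specialization: a factorization $h = f q$ would force the Hessian $h(x, \alpha_0)$ of every cubic $f(x, \alpha_0)$ to be divisible by $f(x, \alpha_0)$; but for the Fermat cubic $f(x, \alpha_0) = x_0^3 + x_1^3 + x_2^3$ the Hessian equals $216\, x_0 x_1 x_2$, which is coprime to $x_0^3 + x_1^3 + x_2^3$. Hence $f, h$ is a regular sequence.

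\emph{Irreducibility.} The group $\mathrm{GL}(L)$ acts on $\mathbb{P}(L) \times \mathbb{P}(\Phi)$, preserves $f$ and $h$ (up to characters), and is transitive on $\mathbb{P}(L)$; since $\mathrm{GL}(L)$ is connected, $F$ is irreducible iff the scheme-theoretic fiber $F_{e_0}$ of the projection $F \to \mathbb{P}(L)$ over $e_0 = [1{:}0{:}0]$ is irreducible. A direct calculation gives $f(e_0, \alpha) = \alpha_{300}$ and, after substituting $\alpha_{300} = 0$,
\[
h(e_0, \alpha) = -8\bigl(\alpha_{102}\alpha_{210}^2 - \alpha_{111}\alpha_{201}\alpha_{210} + \alpha_{120}\alpha_{201}^2\bigr).
\]
Viewed as a quadratic in $\alpha_{201}$, the cubic $g$ in parentheses has discriminant $\alpha_{210}^2(\alpha_{111}^2 - 4\alpha_{102}\alpha_{120})$, which is not a square in the remaining variables, so $g$ is irreducible. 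Since $g$ involves only $5$ of the $9$ coordinates on the hyperplane $\mathbb{P}^8 = \{\alpha_{300} = 0\}$, the fiber $F_{e_0}$ is a cone over the irreducible cubic hypersurface $V(g) \subset \mathbb{P}^4$; so $F_{e_0}$ is irreducible of dimension $7$, and $F$ is irreducible of dimension $9$.

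\emph{Generic reducedness, and conclusion.} It suffices to produce one closed point of $V(f, h)$ at which the Jacobian of $(f, h)$ has rank $2$. Take the Fermat cubic $E: x_0^3 + x_1^3 + x_2^3 = 0$ with flex $e = [1{:}{-}1{:}0]$. The Hessian of $E$ is $\mathrm{diag}(6x_0, 6x_1, 6x_2)$, so at $e$ it equals $\mathrm{diag}(6, -6, 0)$ with adjugate $\mathrm{diag}(0, 0, -36)$; consequently $\partial h/\partial \alpha_{i_0 i_1 i_2}(e, E)$ is proportional to the value of $\partial^2(x_0^{i_0}x_1^{i_1}x_2^{i_2})/\partial x_2^2$ at $e$, and is nonzero only for $(i_0, i_1, i_2) \in \{(1, 0, 2), (0, 1, 2)\}$. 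On the other hand, $\partial f/\partial \alpha_{i_0 i_1 i_2}(e) = (-1)^{i_1}$ if $i_2 = 0$ and $0$ otherwise, so nonzero exactly on $\{(3, 0, 0), (2, 1, 0), (1, 2, 0), (0, 3, 0)\}$. The disjoint supports give linear independence. Combining the three steps, $\mathcal{P}/(f, h)$ is an integral domain, so $(f, h)$ is prime, proving the Theorem. The principal obstacle is the irreducibility step, and within it the identification and irreducibility verification of the specific cubic $g$ cut out on the fiber over $e_0$; everything else is either formal Cohen--Macaulay theory or an explicit calculation at a convenient point.
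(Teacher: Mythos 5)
Your overall strategy is a genuinely different (and viable) route from the paper's, but as written it has one real gap, located exactly where the paper invests most of its effort. To conclude that $(f,h)$ is prime from ``Cohen--Macaulay $+$ irreducible $+$ generically reduced $\Rightarrow$ domain,'' the object that must be irreducible is $\operatorname{Spec}\mathcal P/(f,h)$, i.e.\ the affine bi-cone $C=\{(x,\alpha)\in L\times\Phi\mid f=h=0\}\subset\mathbb A^{13}$. What your irreducibility step actually proves is irreducibility of $F\subset\mathbb P(L)\times\mathbb P(\Phi)$. These are not the same statement: $C$ contains the locus $D=\{x=0\}\cup\{\alpha=0\}$ (both $f$ and $h$ have positive degree in each group of variables), which is invisible biprojectively, and a priori $C\cap U$ ($U$ the complement of $D$) could have several $11$-dimensional components all surjecting onto the single irreducible $F$, since its fibers over $F$ are $2$-dimensional torus orbits and irreducibility of a total space does not follow formally from irreducibility of base and fibers. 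This passage from $Z$ to its multi-cone is precisely the content of the paper's Section~\ref{s2} (Lemma~\ref{ideals}, Proposition~\ref{p1}, Lemma~\ref{p2}, Corollary~\ref{cccc}), and it is absent from your write-up; the closing claim ``bi-saturated, therefore equals $I(F)$'' silently presupposes it.

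The gap is fillable, and your CM step in fact makes the repair easier than in the paper: by unmixedness of the complete intersection every component of $C$ has dimension $11=13-2$, whereas the two pieces of $D$ have dimensions $10$ and $3$, so no component of $C$ lies in $D$ and the components of $C$ are the closures of those of $C\cap U$; moreover $C\cap U\to F$ is the restriction to $F$ of the Zariski-locally trivial $(k^*)^2$-bundle $U\to\mathbb P(L)\times\mathbb P(\Phi)$, so $C\cap U$ is irreducible over the irreducible $F$. With that paragraph inserted, your proof goes through. The remaining steps check out: $f,h$ is a regular sequence (the Fermat specialization is correct); the fiber of $F$ over $[1{:}0{:}0]$ is cut out by $\alpha_{300}$ and $g=\alpha_{102}\alpha_{210}^2-\alpha_{111}\alpha_{201}\alpha_{210}+\alpha_{120}\alpha_{201}^2$, which is irreducible (you should also note that the coefficients of $g$ as a quadratic in $\alpha_{201}$ have no common factor, so the non-square discriminant suffices); and the rank-$2$ Jacobian at $([1{:}{-}1{:}0],\,x_0^3+x_1^3+x_2^3)$ is verified correctly via the adjugate. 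Compared with the paper, you trade Kostant's lemma for the regular-sequence/CM argument and replace the citation of \cite{Pop1}, \cite{Pop2} for irreducibility and $\dim F=9$ by a self-contained $\mathrm{GL}(L)$-equivariance argument; what you must not trade away is the cone lemma.
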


The proof of Theorem \ref{main} is given in  Section \ref{s3}. Section \ref{s2} contains some auxiliary results about multi-cones in products of vector spaces.
The proof of Proposition\;\ref{p1} in this section
uses an idea from
a letter \cite{Ban} of Tatiana Bandman to me. I am sincerely grateful to her.

\vskip 2mm

{\it Conventions and notation}

\vskip 2mm

All algebraic varieties in this note are taken over the field $k$.

We use the standard notation and terminology from \cite{Bor}, \cite{Sha}. In particular, $k[X]$ denotes the $k$-algebra of all regular functions on an algebraic variety $X$.

$\pp(L)$ denotes the projective space of all one-dimensional linear sub\-spaces of a vector space $L$ over $k$, and
$\pi_L\colon L\setminus\{0\}\to \pp(L)$ denotes the canonical projection.

\section{Some properties of multi-cones}\label{s2}

We consider  the nonzero finite-dimensional vector spaces $V_1,\ldots, V_s$  over $k$, where $s\geqslant 2$, and put
\begin{equation}\label{V}
V:=V_1\times\cdots\times V_s.
\end{equation}
We also consider the $s$-dimensional torus
\begin{equation}\label{T}
T:=k^*\times \cdots\times k^*\;\;\mbox{($s$ factors)}.
\end{equation}
For every $v\in V$, $t\in T$, and $i=1,\ldots, s$, we denote by $v_i$ and $t_i$ respectively the projection of $v$ and $t$ to the $i$th factor of the right-hand side of \eqref{V} and \eqref{T}. We put
\begin{equation}\label{KU}
D:=\textstyle\bigcup_{i=1}^s
\{v\in V\mid v_i=0\},\quad U:=V\setminus D.
\end{equation}

The following formula determines the action $T\times V\to V$, $(t, v)\mapsto t\cdot v$ of $T$ on $V$:
\begin{equation}\label{acc}
(t\cdot v)_i:=t_iv_i\;\;\mbox{for all $i$.}
\end{equation}
All sets $\{v\in V\mid v_i=0\}$ and $U$ are stable with respect to this action and the $T$-stabilizer of every point of $U$ is trivial.

Let $q_1,\ldots, q_m$ be the multi-homogeneous regular functions on $V$. Let the multi-degree
of $q_i$ be $(d_{i,1},\ldots, d_{i,s})$, i.e.,
\begin{equation}\label{fi}
q_i(t\cdot v)=t_1^{d_{i,1}}\cdots t_s^{d_{i,s}}q_i(v)\;\;\mbox{for every $t\in T$, $v\in V$.}
\end{equation}
We assume that
\begin{equation}\label{r}
\mbox{$d_{i,j}>0$ for all $i, j$.}
\end{equation}

The functions
$q_1,\ldots, q_m$
determine the following closed subsets $Z$ and
$C$ of  respectively
$\pp(V_1)\times\cdots\times\pp(V_s)$
and $V$:
\begin{align}
Z&:=\{p\in \pp(V_1)\times\cdots\times\pp(V_s)\mid q_1(p)=\cdots=q_m(p)=0\},\label{Z} \\
C&:=\{v\in V\mid q_1(v)=\cdots=q_m(v)=0\}.
\label{C}
\end{align}
It follows from \eqref{fi} and
\eqref{C} that the set $C$ is a multi-cone, i.e.,
is
$T$-stable.

\begin{lemma}\label{ideals}
The ideal of $k[V]$ generated by all multi-homogeneous ele\-ments vanishing on $Z$ coincides with the ideal
\begin{equation}\label{ide}
\{g\in k[V]\mid
g(C)=0
\}.
\end{equation}
\end{lemma}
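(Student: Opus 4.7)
Write $I$ for the ideal generated by multi-homogeneous elements vanishing on $Z$ and $J := \{g \in k[V] : g(C) = 0\}$. The plan is to reduce the equality $I = J$ to a computation on multi-homogeneous pieces via the $T$-action. Since $C$ is a multi-cone, $J$ is $T$-stable and hence multi-graded, $J = \bigoplus_d J_d$, so it suffices to match up multi-homogeneous elements on either side.

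The key step is a projective-affine correspondence: for a multi-homogeneous $g$ of all-positive multi-degree, $g$ vanishes on $Z$ if and only if $g$ vanishes on $C$. The canonical projection $\pi \colon U \to \pp(V_1) \times \cdots \times \pp(V_s)$ sends $C \cap U$ onto $Z$, and by multi-homogeneity $g(v) = 0 \Leftrightarrow g(\pi(v)) = 0$ for $v \in U$; hence $g|_Z = 0 \Leftrightarrow g|_{C \cap U} = 0$. Separately, the positivity assumption \eqref{r} forces $D \subseteq C$ (each $q_j$ vanishes wherever some $v_i = 0$), while any multi-homogeneous $g$ of all-positive multi-degree vanishes on $D$ automatically, since each of its monomials contains a variable from every coordinate group. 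Combining, $g$ vanishes on $C = (C \cap U) \cup D$ exactly when it vanishes on $Z$.

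Next, $J$ is concentrated in strictly positive multi-degrees: if $g \in J$ is multi-homogeneous of multi-degree with some $d_i = 0$, then $g$ is independent of the $V_i$-coordinates, and $D_i := \{v_i = 0\} \subseteq C$ together with $g|_C = 0$ forces $g \equiv 0$ as a polynomial on the remaining factors. So $J = \bigoplus_{d > 0} J_d$, and each $J_d$ (for $d$ with all positive components) equals, by the correspondence, the space of multi-homogeneous degree-$d$ elements vanishing on $Z$. Summing over $d$ gives $J \subseteq I$.

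For the reverse $I \subseteq J$, multi-homogeneous generators of $I$ of all-positive multi-degree lie in $J$ by the correspondence. The main obstacle is handling a potential multi-homogeneous generator $g$ of $I$ of multi-degree with some $d_i = 0$: such $g$ is independent of the $V_i$-coordinates and vanishes on $Z$. The plan is to multiply by arbitrary coordinate forms from $V_i^*$ (and from $V_j^*$ for every other $j$ with $d_j = 0$): each such product is multi-homogeneous of all-positive multi-degree vanishing on $Z$, hence lies in $J$ by the correspondence. This forces $g$ to vanish on $C \setminus D_i$, and then, using the independence of $g$ from $V_i$-coordinates and the Zariski-density of $\pi_{\hat i}(C \setminus D_i) \subseteq \prod_{j \neq i} V_j$ (the crucial non-degeneracy), $g$ must vanish identically on $C$, completing the proof.
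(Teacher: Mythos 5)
Your overall strategy is the same as the paper's: use the $T$-action to reduce everything to multi-homogeneous elements, and then compare vanishing on $Z$ with vanishing on $C=(C\cap U)\cup D$. The paper justifies the multi-grading $J=\bigoplus_d J_d$ via the isotypic decomposition of $k[V]$ and Dedekind's theorem on linear independence of characters, evaluated along a $T$-orbit in $C$; your appeal to ``$J$ is $T$-stable, hence multi-graded'' is the same fact and deserves at least that one-line justification. Your projective--affine correspondence for all-positive multi-degrees, the observation that $J$ lives only in strictly positive multi-degrees, and the resulting inclusion $J\subseteq I$ are all correct, and are in fact spelled out more carefully than in the paper, which compresses the entire comparison of $Z$ with $C$ into the words ``from which the assertion of the lemma obviously follows.''

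The gap is in your final paragraph. The Zariski-density of $\pi_{\hat\imath}(C\setminus D_i)$ in $\prod_{j\neq i}V_j$, which you yourself flag as ``the crucial non-degeneracy,'' does not follow from the hypotheses of the lemma and is false in general, so this step cannot be completed as written. Concretely, take $s=2$, $V_1=V_2=k^2$ with coordinates $x_1,x_2$ and $y_1,y_2$, and $q_1=x_1y_1$, $q_2=x_1y_2$: condition \eqref{r} holds, $Z=\{[0:1]\}\times\pp(V_2)$, and $C=\{x_1=0\}\cup(V_1\times\{0\})$. The element $g=x_1$ is multi-homogeneous of multi-degree $(1,0)$ and vanishes on $Z$, hence lies in $I$, but it does not vanish at the point $\bigl((1,0),(0,0)\bigr)\in C$, so $g\notin J$; here your density claim fails and so does the inclusion $I\subseteq J$ itself. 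What rescues the statement in the paper's application is property \eqref{*} (verified for $Z=F$ in Section~\ref{s3} and needed anyway for Proposition~\ref{p1}(b) and Corollaries~\ref{coro1}, \ref{cccc}): if every projection of $Z$ forgetting one factor is surjective, then a nonzero multi-homogeneous $g$ with some degree component $d_i=0$ descends to $\prod_{j\neq i}V_j$ and would have to vanish on the dense subset $\prod_{j\neq i}(V_j\setminus\{0\})$, forcing $g=0$; thus every nonzero multi-homogeneous element vanishing on $Z$ has all-positive multi-degree and your troublesome case is vacuous. You should therefore either assume \eqref{*} (or the equivalent positivity of the degrees occurring in $I$) and dispose of the last step as just indicated, or note explicitly that without such a hypothesis the asserted equality of ideals does not hold --- a point that the paper's own proof also leaves unaddressed.
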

\begin{proof} For every character $\chi\colon T\to k^*$, the subspace
\begin{equation}\label{isoc}
k[V]_\chi:=\{g\in k[V]\mid g(t\cdot v)=\chi(t)g(v)\;\mbox{for all $t\in T$, $v\in V$}\}
\end{equation}
is an isotypic component of the $T$-module $k[V]$, and the latter is a direct sum of such components.
Since there are $l_{1},\ldots, l_{s}\in \mathbb Z$ such that $\chi(t)=t_1^{l_{1}}\cdots t_s^{l_{s}}$ for every $t\in T$, the elements of $V_{\chi}$ are precisely the multi-homogeneous regular functions on $V$ of the multi-degree $(l_1,\ldots, l_s)$.

Now let $g$ be a nonzero element of the ideal \eqref{ide}. Then
\begin{equation}\label{gi}
g=g_1+\cdots+g_r,
\end{equation}
where
$g_i\in  k[V]_{\chi_i}$, $g_i\neq 0$ for every $i$, and
\begin{equation}\label{disti}
\chi_i\neq\chi_j\;\;\mbox{for all $i\neq j$.}
\end{equation}

We claim that every $g_i$ vanishes on $C$, from which the assertion of the lemma obviously follows. To prove this, take a point $c\in C$.
From \eqref{ide}, \eqref{isoc}, \eqref{gi} and the $T$-invariance of $C$ it follows that
\begin{equation}\label{equaaa}
0=g(t\cdot c)=\chi_1(t)g_1(c)+\cdots+\chi_r(t)g_r(c)\;\;\mbox{for every $t\in T$.}
\end{equation}
In view of \eqref{disti} and Dedekind's theorem on linear in\-de\-pen\-dence of cha\-racters \cite[A.V.27]{Bou}, it follows from  \eqref{equaaa} that $g_i(c)=0$ for every $i$,
which completes the proof.
\end{proof}

The morphism $\pi_{V_1}\times \cdots\times \pi_{V_s}\colon
U\to \pp(V_1)\times\ldots\times\pp(V_s)$ is surjective. In view of  \eqref{Z}, \eqref{C},
its restriction to
$C\cap U$ is a surjective morphism
\begin{equation*}
\alpha\colon C\cap U\to Z.
\end{equation*}
The fibers of $\alpha$ are
$T$-orbits; whence
\begin{equation}\label{af}
\mbox{$\alpha^{-1}(\alpha(c))$  is irreducible and
$s$-dimensional
for every
$c\in C\cap U$.}
\end{equation}

By
\cite[p.\,74, Cor.\,1.14]{Sha},   for any irreducible components $Z'$ and $(C\cap U)'$ of respecti\-ve\-ly $Z$ and $C\cap U$,  in view of \eqref{Z}, \eqref{C} we have
\begin{align}
\dim Z'&\geqslant \dim V-s-m,\label{dZ}\\
\dim C' &\geqslant \dim V-m.\label{dC}
\end{align}

We denote
\begin{equation*}\label{clo}
\overline{C\cap U}:=\mbox{the closure
 of $C\cap U$ in
 $V$}.
\end{equation*}
 \begin{proposition}\label{p1}\

 \begin{enumerate}[\hskip 4mm\rm(a)]
\item $C=(C\cap U)\cup D$.
\item  $D\subseteq \overline{C\cap U}$ if $Z$ shares the following property:
\begin{equation}\label{*}
\left.
\begin{split}
&\mbox{the natural projection}\\
&\hskip 10mm
Z\to \pp(V_{1})\times\cdots\times \widehat{\pp(V_{i})}\times
\cdots\times \pp(V_{s})\\
& \mbox{is surjective for every $i=1,\ldots, s$.}
 \end{split}
 \right\}
 \end{equation}
  \end{enumerate}
 \end{proposition}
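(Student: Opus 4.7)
\smallskip

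\textbf{Plan.} The two parts are essentially independent. For (a) I would use that $V$ decomposes as a disjoint union $U\sqcup D$, so $C = (C\cap U)\sqcup (C\cap D)$, and the only nontrivial content is to check $D\subseteq C$. This will follow immediately from the strict positivity assumption \eqref{r}: for $v\in D$, some coordinate $v_i$ vanishes, and since every $q_j$ is multi-homogeneous with $d_{j,i}>0$, each monomial of $q_j$ contains a positive power of an $i$-th coordinate function, hence $q_j(v)=0$ for all $j$. No further argument is needed.

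For (b) the strategy is to approach an arbitrary $v\in D$ from $C\cap U$ via a $T$-orbit. Let $I:=\{i:v_i=0\}\neq\emptyset$. First I would patch $v$ to a point of $U$ by replacing every zero coordinate with an arbitrary nonzero vector: pick $\tilde v_i\in V_i\setminus\{0\}$ for $i\in I$ and set $\tilde v_k:=v_k$ for $k\notin I$, obtaining $\tilde v\in U$. Next, fix any $i_0\in I$ and use assumption \eqref{*} for $i=i_0$: the point $(\pi_{V_j}(\tilde v_j))_{j\neq i_0}\in\prod_{j\neq i_0}\pp(V_j)$ lifts to a point $(p_1,\ldots,p_s)\in Z$ whose $j$-th coordinate is $\pi_{V_j}(\tilde v_j)$ for $j\neq i_0$. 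Choose any representative $u_{i_0}\in V_{i_0}\setminus\{0\}$ of $p_{i_0}$ and set $u_j:=\tilde v_j$ for $j\neq i_0$; then $u\in U$ and $(\pi_{V_1}(u_1),\ldots,\pi_{V_s}(u_s))\in Z$, so by \eqref{Z}, \eqref{C} and the multi-homogeneity of the $q_i$ one has $u\in C\cap U$.

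Finally, I would exhibit $v$ as a limit of a $T$-orbit in $C\cap U$. Consider the one-parameter family $\lambda\mapsto t(\lambda)\cdot u$ where $t(\lambda)\in T$ has $t(\lambda)_j=\lambda$ for $j\in I$ and $t(\lambda)_k=1$ for $k\notin I$. Since $C\cap U$ is $T$-stable (as noted after \eqref{acc}), every $t(\lambda)\cdot u$ with $\lambda\in k^*$ lies in $C\cap U$, while the limit in $V$ as $\lambda\to 0$ is $(\ldots,0,\ldots,v_k,\ldots)=v$. Hence $v\in\overline{C\cap U}$.

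\textbf{Expected obstacle.} The only genuinely delicate point is that when $|I|>1$ the hypothesis \eqref{*} addresses only one omitted factor at a time, so we cannot directly lift the partial datum $(\pi_{V_k}(v_k))_{k\notin I}$ to a point of $Z$. The trick is that we have full freedom in choosing the lifts at the indices of $I\setminus\{i_0\}$: any nonzero choice works, because the torus contracts all of these coordinates simultaneously in the final limiting step. Once one sees this, the role of \eqref{*} reduces to a single application at one chosen index $i_0\in I$, and the proof closes cleanly.
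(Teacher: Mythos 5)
Your proof is correct and follows essentially the same route as the paper: a single application of property \eqref{*} produces a point of $C\cap U$ agreeing with the target point in all factors but one, and the closure of a one-parameter torus orbit through that point then reaches the target. The only (immaterial) difference is in handling points with several vanishing coordinates: the paper reduces to the case of exactly one vanishing coordinate, using that such points are dense in each set $\{v\in V\mid v_i=0\}$, whereas you fill the remaining zero slots with arbitrary nonzero vectors and contract them all simultaneously with a one-parameter subgroup.
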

 \begin{proof} (a) It follows from \eqref{KU} that
 \begin{equation}\label{sub}
 C\subseteq (C\cap U)\cup D.
 \end{equation}
   By \eqref{acc}, \eqref{r}, every function $q_i$ vanishes on $D$; whence by \eqref{C} this yields
 \begin{equation}\label{sup}
  D\subseteq C.
 \end{equation}
 From \eqref{sub}, \eqref{sup} follows (a).

 (b) For every $i=1,\ldots, s$, the set
 \begin{equation}\label{k0}
\{v\in V\mid v_i=0, v_j\neq 0\;\mbox{for every $j\neq i$}\}
 \end{equation}
 is dense and open in the set $\{v\in V\mid v_i=0\}$. Whence, in view of \eqref{KU}, it suffices to prove that set \eqref{k0} is contained in $\overline{C\cap U}$.
  To prove this, take a point
  $a$ of this set.
  Since $Z$ shares property \eqref{*}, it follows from
 \eqref{k0} the existence of
  a point
 \begin{equation}\label{ccc}
 c\in C\cap U\;\;\mbox{such that $c_j=a_j$ for every $j\neq i$.}
 \end{equation}
 Consider in $T$ the one-dimensional subtorus
 \begin{equation}\label{Ti}
 T_i:=\{t\in T\mid t_j=1\;\mbox{for every $j\neq i$}\}.
 \end{equation}
  Since $c_i\neq 0$, it follows from  \eqref{acc} that
    the $T_i$-orbit of $c$
 is the curve
 \begin{equation*}\label{cur}
 \{u\in U\mid u_j=a_j\;\mbox{for every $j\neq i$}\}.
 \end{equation*}
In view of $T$-stability of the set $C\cap U$, this curve lies in $C\cap U$. Since the closure of this curve in
$V$ clearly contains $a$, this completes the proof.
 \end{proof}

 \begin{corollary} \label{coro1}
 If
  $Z$ shares property \eqref{*}, then
 \begin{enumerate}[\hskip 4mm\rm(a)]
 \item $C=\overline{C\cap U}$;
  \item $C$ is irreducible if and only if  $C\cap U$ is.
 \end{enumerate}
  \end{corollary}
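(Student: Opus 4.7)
The plan is to deduce both parts of Corollary \ref{coro1} from Proposition \ref{p1} together with elementary topological observations; the standing hypothesis throughout is that $Z$ satisfies property \eqref{*}.

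For part (a), I would first invoke Proposition \ref{p1}(a) to write $C=(C\cap U)\cup D$, and then apply Proposition \ref{p1}(b) to replace $D$ by a subset of $\overline{C\cap U}$. This gives the inclusion $C\subseteq \overline{C\cap U}$. The reverse inclusion is immediate: $C$ is closed in $V$ as the common zero locus of $q_1,\ldots,q_m$, so it contains the closure in $V$ of any of its subsets, in particular that of $C\cap U$. Combining the two inclusions yields the equality $C=\overline{C\cap U}$.

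For part (b), the first step is to verify that $C\cap U$ is nonempty, so that the statement is not vacuously degenerate. Property \eqref{*} forces $Z$ itself to be nonempty (each $\pp(V_j)$ is nonempty, so the target of the surjective projection in \eqref{*} is nonempty, hence so is $Z$), and then surjectivity of the morphism $\alpha\colon C\cap U\to Z$ introduced just before Proposition \ref{p1} yields $C\cap U\neq\varnothing$. With this in hand, both implications are standard. If $C$ is irreducible, then $C\cap U$ is a nonempty open subset of $C$ (open because $U$ is open in $V$), and a nonempty open subset of an irreducible space is irreducible. Conversely, if $C\cap U$ is irreducible, then its closure in $V$ is irreducible as well, and by part (a) this closure coincides with $C$.

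I do not anticipate any real obstacle: the substantive content has already been carried out in Proposition \ref{p1}, and the corollary is obtained by combining its two assertions with the two generic facts that a nonempty open subset of an irreducible space is irreducible and that the closure of an irreducible set is irreducible. The only point demanding attention is the nonemptiness of $C\cap U$, which, as indicated above, is precisely what hypothesis \eqref{*} secures via $\alpha$.
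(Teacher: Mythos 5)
Your proof is correct and follows essentially the same route as the paper: part (a) by combining Proposition \ref{p1}(a), Proposition \ref{p1}(b), and the closedness of $C$, and part (b) as a direct consequence of (a) via the standard facts about open subsets and closures of irreducible sets. Your explicit check that $C\cap U\neq\varnothing$ (via property \eqref{*} and the surjectivity of $\alpha$) is a detail the paper leaves implicit, and it is handled correctly.
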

\begin{proof} (a) Since $C$ and $D$ are closed in $V$, Proposition \ref{p1}(a) implies
that $C=(\overline{C\cap U})\cup D$; whence
(a) in view of Proposition \ref{p1}(b).

(b) This follows from (a).
\end{proof}

\begin{lemma}\label{p2} Let $\varphi\colon X\to Y$ be a morphism of algebraic varieties and let $d, n$ be positive integers such that the following hold:
\begin{enumerate}[\hskip 4mm\rm(a)]
\item Y is irreducible and $\dim Y=n$;
\item $\dim\varphi^{-1}(y)=d$ for every $y\in Y$;
\item $\varphi^{-1}(y)$ is irreducible for every $y$ from a dense open subset of $Y$;
\item the dimension of every irreducible component of $X$ is at least $n+d$.
\end{enumerate}
Then $X$ is irreducible.
\end{lemma}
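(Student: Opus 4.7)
The plan is to enumerate the irreducible components $X_1,\ldots,X_r$ of $X$ and show, by descending into a generic fiber of $\varphi$, that each $X_i$ equals all of $X$. The key point is that the dimension hypotheses (b) and (d) will force every restriction $\varphi|_{X_i}$ to be dominant with generic fiber of the maximal possible dimension $d$, after which the irreducibility of the generic fiber from (c) will force each $X_i$ to swallow every such fiber.

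First I would fix a component $X_i$ and set $Y_i:=\overline{\varphi(X_i)}$, an irreducible closed subvariety of $Y$. By the fiber dimension theorem applied to $\varphi|_{X_i}\colon X_i\to Y_i$, every fiber has dimension at least $\dim X_i-\dim Y_i$. Since such a fiber sits inside $\varphi^{-1}(y)$, which has dimension $d$ by (b), this gives $\dim X_i-\dim Y_i\leq d$, hence $\dim Y_i\geq\dim X_i-d\geq n$ by (d). Combined with $Y_i\subseteq Y$ and (a), this forces $Y_i=Y$. The restriction $\varphi|_{X_i}$ is therefore dominant into an irreducible variety of dimension $n$, so its generic fiber has dimension exactly $\dim X_i-n$; applying the previous inequality once more yields $\dim X_i=n+d$ and the generic fiber of $\varphi|_{X_i}$ is $d$-dimensional.

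Next I would extract a dense open $W\subseteq Y$ on which three conditions hold simultaneously: (i) $\varphi^{-1}(y)$ is irreducible, from (c); (ii) $y\in\varphi(X_i)$ for every $i$, via Chevalley's theorem and the dominance of each $\varphi|_{X_i}$; and (iii) $\dim(X_i\cap\varphi^{-1}(y))=d$ for every $i$, by upper semicontinuity of fiber dimension combined with the generic value computed in the previous paragraph. For such $y$, the set $X_i\cap\varphi^{-1}(y)$ is a closed subvariety of the irreducible variety $\varphi^{-1}(y)$ of the same dimension $d$, hence coincides with $\varphi^{-1}(y)$. This shows $\varphi^{-1}(W)\subseteq\bigcap_{i=1}^r X_i$, and in particular $\varphi^{-1}(W)\subseteq X_1$.

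Finally, $\varphi^{-1}(W)$ is open in $X$ and meets every component $X_i$ by (ii), so it is nonempty open in the irreducible set $X_i$, hence dense there, and therefore dense in $X$. Passing to closures yields $X=\overline{\varphi^{-1}(W)}\subseteq X_1$, so $X=X_1$ and $X$ is irreducible. I expect the only delicate point to be the initial dominance argument, where one must apply the fiber dimension theorem to $\varphi|_{X_i}$ mapped into $Y_i$ rather than $Y$ and combine the resulting inequality with both (b) and (d); everything afterwards is routine manipulation with dense open subsets.
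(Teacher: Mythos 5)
Your proof is correct and follows essentially the same route as the paper: decompose $X$ into its irreducible components, use the fiber-dimension theorem together with (b) and (d) to show that each component dominates $Y$ with fibers of dimension exactly $d$, and then use the irreducibility of the generic fiber from (c) to force the components to coincide. The only cosmetic difference is in the endgame: the paper intersects the generic fiber with the pairwise disjoint open sets $X_i^0=X_i\setminus\bigcup_{j\neq i}X_j$ and derives a contradiction from the irreducibility of that fiber, whereas you intersect with the closed sets $X_i$ and use that a closed subset of full dimension in an irreducible variety is the whole variety, then conclude by density of $\varphi^{-1}(W)$.
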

\begin{proof}Let $X_1,\ldots, X_r$ be all irreducible components of $X$, let
$
X_i^0
:=\{x\in X_i\mid x\notin X_j\;\mbox{for every $j\neq i$}\},
$
and let $\varphi_i\colon X_i^0\to Y$ be the restriction of $\varphi$ to $X_i^0$. Then
\begin{equation}\label{empt}
X_i^0\cap X_j^0=\varnothing \;\; \mbox{if $j\neq i$},
\end{equation}
and, in view of (d), (b),
\begin{equation}\label{neqq}
\dim X_i^0\geqslant n+d,\;\; \dim \varphi_i^{-1}(\varphi_i(x))\leqslant d
\;\;\mbox{for every $x\in X_i^0$.}
\end{equation}
 By \cite[p.\,75, Thm.\,1.25]{Sha}, for every point $z$ of an open dense subset of $X_i^0$,
 we have
 \begin{align}
 \dim X_i^0-\dim \varphi_i^{-1}(\varphi_i(z))&=\dim \varphi_i(X_i^0),\label{tttt}\\
 \dim \varphi_i^{-1}(\varphi_i(z))&\leqslant \dim \varphi_i^{-1}(\varphi_i(x))\;\;\mbox{for all $x\in X_i^0$.}\label{ttttt}
 \end{align}
It follows from (a), \eqref{neqq}, \eqref{tttt} that  $\dim X_i^0=n+d$, and
\begin{gather}\label{ddddd}
\left.
\dim \varphi_i^{-1}(\varphi_i(z))=d,\;\;
\mbox{$\varphi_i$ is dominant}.
\right.
\end{gather}
From (b), \eqref{ttttt}, \eqref{ddddd} we deduce that
\begin{equation}\label{ddd}
\dim \varphi_i^{-1}(\varphi_i(x))=d \;\;\mbox{for every $x\in X_i^0$.}
\end{equation}
In turn, by (c) and \eqref{ddd},  for every point $y$ of a dense open subset of $Y$, the set $X_i^0\cap \varphi^{-1}(y)$ is open in the irreducible set $\varphi^{-1}(y)$. The irreducibility of $\varphi^{-1}(y)$ implies that different such sets cannot be disjoint.
This and \eqref{empt} yield $r=1$.
\end{proof}

\begin{corollary} \label{cccc}
We use the above notation. Let the following hold:
\begin{enumerate}[\hskip 4mm\rm(a)]
\item $Z$ is irreducible;
\item $Z$ shares property \eqref{*};
\item $\dim Z=\dim V-s-m$.
\end{enumerate}
Then $C$ is irreducible.
\end{corollary}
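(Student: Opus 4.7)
The plan is to combine Corollary \ref{coro1}(b) with Lemma \ref{p2}. By hypothesis (b) and Corollary \ref{coro1}(b), it suffices to prove that $C\cap U$ is irreducible. For this I will apply Lemma \ref{p2} to the surjective morphism $\alpha\colon C\cap U\to Z$ (introduced just before \eqref{af}), taking $X:=C\cap U$, $Y:=Z$, and the numerical parameters
\[
n:=\dim V-s-m,\qquad d:=s,\qquad\text{so that}\qquad n+d=\dim V-m.
\]

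The four hypotheses of Lemma \ref{p2} are then verified as follows. Condition (a) on $Y$ is exactly the conjunction of hypotheses (a) and (c) of the corollary. Conditions (b) and (c), concerning the dimension and irreducibility of the fibers of $\alpha$, both follow from \eqref{af}, which in fact asserts irreducibility of \emph{every} fiber (not merely over a dense open). Only condition (d) requires a small additional remark: the irreducible components of $C\cap U$ are precisely the sets of the form $C'\cap U$, where $C'$ runs over those irreducible components of $C$ that meet the open set $U$; for each such component one has $\dim(C'\cap U)=\dim C'\geqslant\dim V-m=n+d$ by the general dimension estimate \eqref{dC}.

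With all four hypotheses verified, Lemma \ref{p2} yields the irreducibility of $C\cap U$, and Corollary \ref{coro1}(b) then transports this to the irreducibility of $C$, completing the proof. There is no real obstacle here: the corollary is a formal packaging of the two preceding results. The only point that deserves care is condition (d), and this is handled by the observation that intersecting an irreducible component with an open set does not drop its dimension.
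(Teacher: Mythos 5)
Your proof is correct and follows the same route as the paper: apply Lemma \ref{p2} to $\alpha\colon C\cap U\to Z$ with $d=s$, $n=\dim V-s-m$, using \eqref{af} for the fibers and the lower bound $\dim V-m$ from \eqref{dC} for condition (d), then pass from $C\cap U$ to $C$ via Corollary \ref{coro1}(b). Your extra remark justifying condition (d) is a welcome elaboration of what the paper leaves implicit, but it is not a different argument.
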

\begin{proof} In view of  (a), (c), and  \eqref{af}, the conditions of Lemma \ref{p2} are met for $X=C\cap U$, $Y=Z$, $\varphi =\alpha$, $d=s$, $n=\dim V-s-m$. Hence, by this lemma, $C\cap U$ is irreducible. In turn, by (b) and Corollary \ref{coro1}(b),  the latter implies irreducibility of\;$C$.
\end{proof}

\section{Proof of Theorem \ref{main}}\label{s3}

We use the notation from Sections \ref{s1}, \ref{s2} and set
\begin{equation}
\label{proo}
\left.
\begin{split}
s&=2, \,V_1=L, \,V_2=\Phi,\\
m&=2,\, q_1=f,\, q_2=h,\\
Z&=F.
\end{split}
\right\}
\end{equation}
In view of \eqref{dfh} condition \eqref{r} holds.

Since any two algebraic curves in the projective plane $\pp(L)$ intersect (see \cite[p.\,71, Cor.\,1.8]{Sha}), and each point of $\pp(L)$ clearly is a flex of some elliptic curve in $\pp(L)$,  both natural projections $\pp(L)\leftarrow F\to \pp(\Phi)$ are surjective (see also \cite{Pop2}). Hence $F$ shares property \eqref{*}. By \cite[Thms.\,1, 2]{Pop1}, \cite[Thms.\,1, 2]{Pop2}, the set $F$ is irreducible and $\dim F=9$ (the restriction $k=\mathbb C$ made in these papers is eliminated due to the Lefschetz principle). It then follows from \eqref{proo} and Corollary \ref{cccc} that the set
$C$ (see   \eqref{C}) is irreducible.

Now we will apply the following statement:

\begin{lemma}\label{ci}
Suppose
the polynomials
$g_1,\ldots, g_d\in k[\mathbb A^n]$ share the following properties:

\begin{enumerate}[\hskip 4.2mm\rm(a)]
\item
the set
$S:=\{a\in \mathbb A^n\mid g_1(a)=\ldots=g_d(a)=0\}$
 is ir\-re\-ducible;
\item there is
$a\in S$ such that the values $(dg_1)_a,\ldots,  (dg_d)_a$
at $a$ of the differential forms $dg_1,\ldots,  dg_d$ on $\mathbb A^n$ are linearly indepen\-dent over $k$.
\end{enumerate}
Then $S$ is an $(n-d)$-dimensional ideal theoretic complete intersection
in $\mathbb A^n$ and
$\{g\in k[\mathbb A^n]\mid g(S)=0
\}
$
is the ideal of $k[\mathbb A^n]$ generated by $g_1,\ldots, g_d$.
\end{lemma}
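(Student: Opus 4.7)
The plan is to prove the equality of ideals
\[ I:=(g_1,\ldots,g_d)\subseteq k[\mathbb A^n]\qquad\text{and}\qquad J:=\{g\in k[\mathbb A^n]\mid g(S)=0\}. \]
Since $V(I)=S$ and $S$ is irreducible by (a), Hilbert's Nullstellensatz identifies $J$ with $\sqrt{I}$ and shows that $J$ is prime, so the task reduces to proving that $I$ is radical; once this is done, both assertions of the lemma (the codimension count and the description of the ideal of $S$) follow at once.

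The execution will proceed in three steps. First, I pin down the dimension: Krull's Hauptidealsatz gives $\dim S\geqslant n-d$; on the other hand, (b) says that the differentials $(dg_i)_a$ are linearly independent, so their common kernel in $T_a\mathbb A^n$ has dimension $n-d$, and since this kernel contains the Zariski tangent space $T_aS$, one obtains $\dim_aS\leqslant \dim T_aS\leqslant n-d$. Combined with irreducibility this forces $\dim S=n-d$ and makes $a$ a smooth point of $S$. Second, I extract the commutative-algebra backbone: $k[\mathbb A^n]$ is Cohen--Macaulay and $\mathrm{ht}(I)=d$ equals the number of generators, so $g_1,\ldots,g_d$ form a regular sequence and $k[\mathbb A^n]/I$ is Cohen--Macaulay, hence unmixed; irreducibility of $S$ then leaves $J$ as the only associated prime of $I$, i.e.\ $I$ is $J$-primary. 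Third, I exploit smoothness at $a$: the linear independence of $(dg_i)_a$ makes $(k[\mathbb A^n]/I)_{\mathfrak m_a}$ a regular local ring of dimension $n-d$, hence a domain and so reduced; consequently the nilradical $J/I$ of $k[\mathbb A^n]/I$ vanishes after localization at $\mathfrak m_a$. Given any $g\in J$, this produces $s\in k[\mathbb A^n]$ with $s(a)\neq 0$ and $sg\in I$; since $a\in S$ forces $J\subseteq\mathfrak m_a$, we have $s\notin J$, and the $J$-primariness of $I$ then yields $g\in I$, so $J\subseteq I$ and we are done.

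The step I expect to be the main obstacle is the third one, where the purely local information at the single point $a$ supplied by (b) has to be leveraged into a \emph{global} identity of ideals. The bridge is the $J$-primariness of $I$, itself a consequence of Cohen--Macaulayness; without it, smoothness at one point would only guarantee that $I$ and $J$ agree on a neighborhood of $a$, not on all of $\mathbb A^n$.
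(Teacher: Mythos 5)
Your argument is correct and complete. The paper itself gives no proof of this lemma: it simply cites Kostant \cite[Lem.\,4]{Ko}, so your write-up is a genuinely self-contained alternative. Each step checks out: the Hauptidealsatz together with the tangent-space bound $\dim_a S\leqslant\dim T_aS\leqslant n-d$ and irreducibility pins $\dim S=n-d$; since $k[\mathbb A^n]$ is Cohen--Macaulay and $\operatorname{ht}(I)=d$ equals the number of generators, $g_1,\ldots,g_d$ is a regular sequence, $k[\mathbb A^n]/I$ is Cohen--Macaulay and hence unmixed, and irreducibility leaves $J=\sqrt I$ as the sole associated prime, so $I$ is $J$-primary; finally, the linear independence of the $(dg_i)_a$ forces the embedding dimension of $(k[\mathbb A^n]/I)_{\mathfrak m_a}$ down to its Krull dimension $n-d$, so that local ring is regular, hence a domain, and the resulting identity $I_{\mathfrak m_a}=J_{\mathfrak m_a}$ propagates globally via $J$-primariness exactly as you say. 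This is in substance the classical Macaulay-unmixedness argument that underlies Kostant's lemma, so you have effectively reconstructed the cited proof rather than found a new one; the payoff of your version is that the note would become self-contained at this point, whereas the paper's citation keeps the exposition short at the cost of an external dependency.
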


\begin{proof} See \cite[Lem.\,4]{Ko}.
\end{proof}

In view of \eqref{C}, \eqref{proo},  and the irreducibility of $C$, it follows from Lem\-mas\;\ref{ideals} and \ref{ci}
that to complete the proof of Theorem \ref{s3} it suffices to find a point $a\in C$ such that $(df)_a$, $(dh)_a$ are linearly in\-de\-pen\-dent over $k$. Below we find such a point.

For any polynomial $p\in \mathcal P
$, we
put
\begin{equation}\label{nota}
p_{(i_1\ldots i_s)}:=\frac{\partial^s p}{\partial x_{i_1}\ldots\partial x_{i_s}}.
\end{equation}
The polynomial $p_{(i_1\ldots i_s)}$ does not change under any permutation of indices $i_1,\ldots, i_s$. It is enough for us to find a point $a\in C$ such that
\begin{equation}\label{li}
{\rm det}\!
\begin{pmatrix}
f_{(0)}& f_{(1)}\\
h_{(0)}& h_{(1)}
\end{pmatrix}\!(a)\neq 0.
\end{equation}

Let $c\in V$ be the point determined by the conditions
\begin{equation}\label{point}
\begin{gathered}
x_0(c_1)=0,\;
x_1(c_1)=-1,\;
x_2(c_1)=1,\\
c_2=x_0^3+x_1^3+x_2^3+x_0x_1x_2.
\end{gathered}
\end{equation}
By \cite[Lem.\,6]{Pop2}, the point $c$ lies in $C$ (this can also be verified directly using
\eqref{C}, \eqref{ef}, \eqref{eh}). We shall show that  \eqref{li} holds for
\begin{equation}\label{ac}
a=c.
\end{equation}

 In view of \eqref{ef} we have
\begin{equation}\label{dF}
\left.
\begin{split}
f_{(0)} & = \alpha_{120}x_1^2 + \alpha_{102}x_2^2 + \alpha_{111}x_1x_2
\\&\hskip 
4.8mm
+ 2\alpha_{210}x_0x_1
 + 2\alpha_{201}x_0x_2 + 3\alpha_{300}x_0^2,\\
f_{(1)} & = \alpha_{210}x_0^2 + \alpha_{012}x_2^2 + \alpha_{111}x_0x_2
\\&\hskip 
4.8mm
+ 2\alpha_{120}x_0x_1 +
2\alpha_{021}x_1x_2 + 3\alpha_{030}x_1^2,\\
f_{(2)} & = \alpha_{201}x_0^2 + \alpha_{021}x_1^2 + \alpha_{111}x_0x_1
\\&\hskip 
4.8mm
+
2\alpha_{102}x_0x_2 + 2\alpha_{012}x_1x_2 + 3\alpha_{003}x_2^2,
\end{split}\;\right\}
\end{equation}

\begin{equation}\label{dou}
\left.\begin{split}
f_{(00)}&=2\alpha_{210}x_1+2\alpha_{201}x_2+6\alpha_{300}x_0,\\
f_{(01)}&=2\alpha_{120}x_1+\alpha_{111}x_2+2\alpha_{210}x_0,\\
f_{(02)}&=2\alpha_{102}x_2+\alpha_{111}x_1+2\alpha_{201}x_0,\\
f_{(11)}&=2\alpha_{120}x_0+2\alpha_{021}x_2+6\alpha_{030}x_1,\\
f_{(12)}&=2\alpha_{012}x_2+\alpha_{111}x_0+2\alpha_{021}x_1,\\
f_{(22)}&=2\alpha_{102}x_0+2\alpha_{012}x_1+6\alpha_{003}x_2.
\end{split}\;\right\}
\end{equation}

\begin{equation}\label{thr}
\left.
\begin{split}
f_{(000)}&=6\alpha_{300},\,
f_{(001)}=2\alpha_{210},\\
f_{(002)}&=2\alpha_{201},\,
f_{(011)}=2\alpha_{120},\\
f_{(012)}&=\alpha_{111},\,\hskip 2mm
f_{(022)}=2\alpha_{102},\\
f_{(111)}&=6\alpha_{030},\,
f_{(112)}=2\alpha_{021},\\
f_{(122)}&=2\alpha_{012},\,
f_{(222)}=6\alpha_{003}.
\end{split}\;\right\}
\end{equation}

\noindent It follows from \eqref{point}, \eqref{dF}, \eqref{dou}, \eqref{thr} that

\begin{equation}\label{one}
f_{(i)}(c)=\begin{cases}
-1&\hskip -2.5mm\mbox{при $(i)=(0)$},\\
3&\hskip -2.5mm\mbox{при $(i)=(1), (2)$.}
\end{cases}
\end{equation}

\begin{equation}\label{two}
f_{(ij)}(c)=\begin{cases}
0&\hskip -2.5mm\mbox{при $(ij)=(00), (12)$},\\
1&\hskip -2.5mm\mbox{при $(ij)=(01)$},\\
-1&\hskip -2.5mm\mbox{при $(ij)=(02)$},\\
6&\hskip -2.5mm\mbox{при $(ij)=(22)$},\\
-6&\hskip -2.5mm\mbox{при $(ij)=(11)$},
\end{cases}
\end{equation}

\begin{equation}\label{three}
f_{(ijl)}(c)=\begin{cases}
6&\hskip -2.5mm\mbox{при $(ijl)=(000), (111), (222)$},\\
1&\hskip -2.5mm\mbox{при $(ijl)=(012)$},\\
0&\hskip -2.5mm\mbox{при $(ijl)=(001), (002), (011), (022), (112), (122)$}.\\
\end{cases}
\end{equation}

In view of \eqref{eh} the polynomial $h$ has the form
\begin{equation*}
h=f_{(00)}f_{(11)}f_{(22)} +2f_{(01)}f_{(12)}f_{(02)}
-f_{(02)}^2f_{(11)}-f_{(12)}^2f_{(00)}-f_{(01)}^2f_{(22)},
\end{equation*}
from which it follows that for any $i$ the following equality holds
\begin{equation}\label{hi}
\begin{aligned}
h_{(i)}&=f_{(i00)}f_{(11)}f_{(22)}+f_{(00)}f_{(i11)}f_{(22)}+f_{(00)}f_{(11)}f_{(i22)}\\
&\hskip 4.4mm +2f_{(i01)}f_{(12)}f_{(02)}+2f_{(01)}f_{(i12)}f_{(02)}+2f_{(01)}f_{(12)}f_{(i02)}\\
&\hskip 4.4mm -2f_{(02)}f_{(i02)}f_{(11)}-f_{(02)}^2f_{(i11)}\\
&\hskip 4.4mm -2f_{(12)}f_{(i12)}f_{(00)}-f_{(12)}^2f_{(i00)}\\
&\hskip 4.4mm -2f_{(01)}f_{(i01)}f_{(22)}-f_{(01)}^2f_{(i22)}.
\end{aligned}
\end{equation}
From \eqref{hi}, \eqref{two}, \eqref{three} we get
\begin{equation}\label{hhh}
h_{(0)}(c)=-218,\;h_{(1)}(c)=-18.
\end{equation}
From \eqref{one}, \eqref{hhh}, and \eqref{ac} it now follows that \eqref{li} holds as claimed. This completes the proof. \hfill $\Box$


\begin{thebibliography}{Xxxa}

   \bibitem [Ban]{Ban} T.\;Bandman,  {\it Letter to V.\;L.\;Popov}, September $9, 2024$.

     \bibitem [Bor]{Bor} A.\;Borel,  {\it Linear Algebraic Groups}, 2nd Ed.
      Graduate Text in Mathematics, Vol. 126, Springer-Verlag, New York, 1991.

       \bibitem [Bou]{Bou} N. Bourbaki, {Algebra} II, Chap. 4--7, Springer-Verlag, Berlin, 1990.

 \bibitem[Har]{Har}       J.\;Harris, {\it Galois groups of enumerative problems}, Duke Math. J.
 {\bf 46} (1979), no. 4, 685--724.

  \bibitem[Kos]{Ko} B.\;Kostant, {\it Lie group representations on polynomial rings}, Amer. J. Math. {\bf 85} (1963), no. 3, 327--404.

 \bibitem[Kul${}_1$]{Kul1}     V. S. Kulikov, {\it The Hesse curve of a Lefschetz pencil of plane curves},
 Russian Math. Surveys {\bf 72} (2017), no. 3, 574--576.

\bibitem[Kul${}_2$]{Kul2} Vik.\;S.\;Kulikov, {\it On the variety of the inflection points of plane cubic curves},
Izv. RAN. Ser. Mat. {\bf 83} (2019), no. 4,	12--157.

    \bibitem[Pop${}_1$]{Pop1} V.\;L.\;Popov,\;{\it Rational differential forms on the variety of flexes of plane cubics}, Russian Math. Surveys {\bf 74} (2019), 543--545.

     \bibitem[Pop${}_2$]{Pop2}   V.\;L.\;Popov,\;{\it The variety of flexes of plane cubics}, arXiv: 2408.16488  (2024).

         \bibitem[Pop${}_3$]{Pop3}  V.\;L.\;Popov,\;{\it On the variety of flexes of plane cubics}, Russian Math. Surveys {\bf 79} (2024), 6(480), 169--170 (in Russian).

  \bibitem[Sha]{Sha}  I.\;R.\;Shafarevich,\;{\it Basic Algebraic Geometry} 1, Springer, Heidelberg, 2013.


 \end{thebibliography}
\end{document}